\newtheorem{theorem}{Theorem}[section]
\newtheorem{lemma}[theorem]{Lemma}
\theoremstyle{definition}
\theoremstyle{thma}
\newtheorem*{thma}{Theorem A}
\theoremstyle{remark}
\newtheorem{example}[theorem]{Example}
\numberwithin{equation}{section}
\begin{document}

\title[Integrability of distortion of inverses]
{Planar mappings of subexponentially integrable distortion: integrability of distortion of inverses}

\author[H.-Q. Xu]{Haiqing Xu}
\address{School of Mathematical Sciences, University of Science and
Technology of China, Hefei 230026, China}

\email{hqxu$\symbol{64}$mail.ustc.edu.cn}

\thanks{This work was supported by the National Natural Science
Foundation of China grants 11571333, 11471301.}

\keywords{Mappings of finite distortion; Subexponential distortion; Distortion of inverses.}
\subjclass[2010]{30C62.}

\begin{abstract}
We establish the optimal regularity for the distortion of inverses of mappings of finite distortion with logarithm-iterated style subexponentially integrable distortion, which generalizes the Theorem 1. of [J. Gill, Ann. Acad. Sci. Fenn. Math. 35 (2010), no. 1, 197--207].
\end{abstract}

\maketitle

\section{Introduction}
We say that a mapping $f:\Omega \rightarrow \mathbb{R}^{n}$ in a domain $\Omega \subset \mathbb{R}^{n}$ is a mapping of finite distortion, if
\begin{enumerate}
\item[(i)] $\ f \in W^{1,1}_{loc}(\Omega,\mathbb{R}^{n}),$
\item[(ii)] the Jacobian determinant $J_{f}(z) \in  L^{1}_{loc}(\Omega)$ and
\item[(iii)] there is a measurable function $K(z):\Omega \rightarrow [1,+\infty]$ with $K(z)\ <\ \infty$ almost everywhere such that
\begin{equation}\label{distortion inequality}
|Df(z)|^{n}\ \le \  K(z)J_{f}(z)\ \ \ \mbox{for\ \ almost\ all }\ z \in \Omega,
\end{equation}
\end{enumerate}
where $|Df(z)|$ is the operator norm of matrix $Df(z)$.
For mappings of finite distortion, we define the distortion function by
\begin{equation*}
K_{f}(z)\ =\
\begin{cases}
\frac{|Df(z)|^n}{J_{f}(z)},  &\mbox{if } z \in  \left\{z \in \Omega:J_{f}(z)\ > \ 0 \right\} \\
1, & \mbox{if } z \in  \left\{z \in \Omega:J_{f}(z)\ =\ 0 \right\},
\end{cases}
\end{equation*}
then the distortion inequality (\ref{distortion inequality}) becomes
\begin{equation}\label{distortion equality}
|Df(z)|^{n}\ = \  K_f(z)J_{f}(z).
\end{equation}
We will limit the discussion in this paper to the planar case, i.e. $n=2.$ In this case, since $|Df(z)|=|f_z|+|f_{\bar{z}}|$ and $J_f(z)=|f_z|^2-|f_{\bar{z}}|^2$, the distortion equality (\ref{distortion equality}) is equivalent to the Beltrami equation
\begin{equation}\label{beltrami equality}
\frac{\partial{f(z)}}{\partial{\bar{z}}} = \mu(z)\frac{\partial{f(z)}}{\partial z}
\end{equation}
where $ \frac{\partial}{\partial \bar{z}} = \frac{1}{2}\left( \frac{\partial}{\partial x} + i\frac{\partial}{\partial y}\right)$, $ \frac{\partial}{\partial z} = \frac{1}{2}\left( \frac{\partial}{\partial x} - i\frac{\partial}{\partial y}\right)$ and $|\mu(z)| = \frac{K_f(z) - 1}{K_{f}(z) + 1}.$
For more details about mappings of finite distortion, we refer the reader to \cite{hencl2014lectures} and the references therein.

If $||\mu||_{\infty}\ \le\ k\ <1$, then the classical measurable Riemann mapping theorem tells that the Beltrami equation (\ref{beltrami equality}) admits a homeomorphic solution and other solutions are represented by composing the homeomorphic solution with holomorphic functions, see \cite{ahlfors2006lectures,astala2009elliptic}.

When $||\mu||_{\infty} = 1$, the Beltrami equation (\ref{beltrami equality}) becomes degenerate. David dealt with this degenerate Beltrami equation in \cite{david1988solution}, where he generalized the measurable Riemann mapping theorem when the distortion function $K$ satisfies $\exp(pK) \in L^1_{loc}(\Omega)$ for some $p\ >\ 0$. And David also noted that it is not necessary that the distortion of $f^{-1}$ is exponentially integrable when $\exp(K_{f})$ is integrable. Later, Hencl and Koskela \cite{hencl2006regularity} proved $K_{f^{-1}} \in L^{\beta}_{loc}$ where $\beta\ =\ c_{0}p$ with absolute constant $c_{0}$, under the local integrability of $\exp(p K_{f})$. Based on Theorem 1.1 of \cite{astala2010optimal}, Gill \cite{gillJMAA} ascertained the sharp inequality $c_{0}\ < \ 1$.
The comprehensive statement is as follows.

\begin{thma}
Suppose $f :\Omega \rightarrow \mathbb{R}^2$ is a homeomorphic mapping of finite distortion to the Beltrami equation (\ref{beltrami equality}), with the associated distortion function $K_{f}$. If
\begin{equation*}
\exp(pK_{f}) \in L^{1}_{loc}(\Omega),
\end{equation*}
then $f^{-1}$ is a mapping of finite distortion and the distortion function $K_{f^{-1}}$ satisfies
\begin{equation*}
K_{f^{-1}} \in L^{\beta}_{loc}(f(\Omega))\ \  \mbox{   for }\ 0\ <\ \beta\ <\ p.
\end{equation*}
Moreover this result is sharp in the sense that for every $p\ >\ 0$ there are functions $f$ as above such that $K_{f^{-1}} \notin L_{loc}^{p}$.
\end{thma}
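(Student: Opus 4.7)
The strategy is to reduce the problem on $f^{-1}$ to a problem on $f$. In the planar case, $K_{f^{-1}}(f(z))=K_f(z)$ holds almost everywhere, since the singular values of $Df^{-1}\circ f$ are reciprocal to those of $Df$. Once the change of variables formula is justified, this gives
$$
\int_V K_{f^{-1}}(w)^{\beta}\,dw = \int_{f^{-1}(V)} K_f(z)^{\beta}\,J_f(z)\,dz
$$
for every $V$ compactly contained in $f(\Omega)$. The prerequisites for the change of variables are $f^{-1}\in W^{1,2}_{loc}$ and the Lusin $(N)$ and $(N^{-1})$ conditions for $f$ and $f^{-1}$. The first follows from the identity $\int |Df^{-1}|^{2}\,dw = \int K_f\,dz$ together with the trivial observation that $\exp(pK_f)\in L^{1}_{loc}$ gives $K_f\in L^{q}_{loc}$ for every $q<\infty$; the second is standard for bi-Sobolev planar homeomorphisms of finite distortion. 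These two facts also immediately yield that $f^{-1}$ is itself a mapping of finite distortion.

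The core estimate is $\int K_f^{\beta}J_f\,dz<\infty$ on every compact subset, for $0<\beta<p$. Writing $K_f^{\beta}J_f = K_f^{\beta-1}|Df|^{2}$ via the distortion identity, the case $\beta\le 1$ is immediate from $|Df|^{2}\in L^{1}_{loc}$. For $\beta>1$ the plan is to apply Young's inequality in an Orlicz pair $(\Phi,\Psi)$ with $\Phi(s)=s\log^{\alpha}(e+s)$ and conjugate $\Psi$ growing roughly like $\exp(t^{1/\alpha})$, so that
$$
K_f^{\beta-1}|Df|^{2}\le \Phi(|Df|^{2})+\Psi(K_f^{\beta-1}).
$$
Theorem 1.1 of \cite{astala2010optimal}, the sharp higher integrability of $|Df|$ under $\exp(pK_f)\in L^{1}_{loc}$, yields $\Phi(|Df|^{2})\in L^{1}_{loc}$ for every $\alpha<p$. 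For the conjugate side one needs $(\beta-1)/\alpha<1$, i.e.\ $\alpha>\beta-1$; once this holds, $\Psi(K_f^{\beta-1})$ is dominated by $C_\delta\exp(\delta K_f)$ for any $\delta>0$, and the latter lies in $L^{1}_{loc}$ since $(\exp(pK_f))^{\delta/p}\in L^{1}_{loc}$ for $\delta<p$ by H\"older on bounded domains. The strict inequality $\beta<p$ ensures the interval $(\beta-1,p)$ for the choice of $\alpha$ is non-empty, and the argument degenerates exactly at $\beta=p$.

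Sharpness is obtained via the construction of Gill \cite{gillJMAA}: for each $p>0$, produce an explicit radial self-map $f(z)=(z/|z|)\rho(|z|)$ of the unit disc with the radial profile $\rho$ tuned so that $\exp(pK_f)$ is barely in $L^{1}_{loc}$ while $K_{f^{-1}}$ just fails to be in $L^{p}_{loc}$. The whole construction reduces to a solvable ODE together with a one-dimensional integral estimate. The main obstacle I anticipate is isolating the correct Orlicz pair $(\Phi,\Psi)$ so that both Young terms are handled simultaneously: this requires invoking the optimal gain from \cite{astala2010optimal} rather than the weaker statement $|Df|\in L^{2}_{loc}$, and it is exactly at this matching step that the sharp threshold $\beta<p$ arises organically, in agreement with the sharpness part of the statement.
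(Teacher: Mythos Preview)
Theorem~A is not proved in this paper---it is quoted from Gill \cite{gillJMAA}---but the paper's proof of Theorem~\ref{main theorem} follows exactly the template you propose, and specialising it to the exponential case recovers Gill's argument. So your overall strategy (change of variables via $K_{f^{-1}}\circ f=K_f$, then a Young splitting with one side controlled by the hypothesis on $K_f$ and the other by sharp higher integrability from \cite{astala2010optimal}) is the correct and standard one.

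The difference is in how the Young step is organised. Gill, and the paper in its iterated-log generalisation, apply Young directly to the product $K_f^{\beta}\cdot J_f$ with the pair $\Phi(t)=\exp\bigl(pt^{1/\beta}\bigr)$, $\Phi^{*}(s)\sim s\log^{\beta}(e+s)$, so that the input needed is precisely $J_f\log^{\beta}(e+J_f)\in L^{1}_{loc}$ for $\beta<p$---which is the actual content of \cite[Theorem~1.1]{astala2010optimal} (a statement about $J_f$, not $|Df|$). Your detour $K_f^{\beta}J_f=K_f^{\beta-1}|Df|^{2}$ replaces this by integrability of $|Df|^{2}$, and that creates a real gap for $p\le1$: the claim ``$\beta\le1$ is immediate from $|Df|^{2}\in L^{1}_{loc}$'' is false in this range, as the radial maps $f(z)=(z/|z|)\bigl(\log(e/|z|)\bigr)^{-a}$ with $p/2<a\le1/2$ satisfy $e^{pK_f}\in L^{1}(\mathbb{D})$ yet $|Df|^{2}\notin L^{1}(\mathbb{D})$. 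There is also the minor point that the sharp range for $|Df|^{2}\log^{\alpha}(e+|Df|)\in L^{1}_{loc}$ is $\alpha<p-1$ rather than $\alpha<p$, though this one is harmless since $(\beta-1,p-1)$ is still non-empty when $\beta<p$. Both issues disappear if you drop the $|Df|^{2}$ detour and work with $J_f$ directly, as Gill and the paper do; your sharpness sketch is the same radial construction they use.
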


Let
\begin{equation}\label{A_{p,n}}
\mathcal{A}_{p,n}(x)\ =\ \frac{px}{1\ +\ \log_{(1)}(x) \log_{(2)}(e\ -\ 1\ +\ x) \cdots \log_{(n)}\left(e^{e^{\iddots^{e}}}\ -\ 1 +\ x\right)}\ - \ p,
\end{equation}
where
$\log_{(i)}(x)\ =\ \log \left(\cdots \left(\log \left (\log(x)\right) \right)\cdots \right)$ and $\exp_{(i)}(x)\ =\ \exp  \left(\cdots \left( \exp \left ( \exp(x)\right) \right)\cdots \right)$ are $i$-iterated logarithm and exponent for $i\ =\ 1,\ 2,\ \cdots$.
Gill in \cite{gill2010planar} generalized Theorem 1.1. of \cite{astala2010optimal} to the solution $f$ to the Beltrami equation (\ref{beltrami equality}) with $\exp\left[\mathcal{A}_{p,n}(K_{f}) \right] \in L^{1}_{loc}$.
However there is no corresponding result analogous to Theorem A..

The aim of this article is to present a generalization of Theorem A. under the local integrability of $\exp[\mathcal{A}_{p,n}(K_f)]$.

\begin{theorem}\label{main theorem}
Suppose $f :\Omega \rightarrow \mathbb{R}^2$ is a homeomorphic mapping of finite distortion to the Beltrami equation (\ref{beltrami equality}), with the associated distortion function $K_{f}$. If
\begin{equation}\label{main condition}
\exp\left[\mathcal{A}_{p,n}(K_{f}) \right] \in L^{1}_{loc}(\Omega),
\end{equation}
then $f^{-1}$ is a mapping of finite distortion and the distortion function $K_{f^{-1}}$ satisfies
\begin{equation}\label{main result}
\log_{(n)}\left(e^{e^{\iddots^{e}}}\ +\ K_{f^{-1}}\right) \in L^{\beta}_{loc}(f(\Omega))\ \  \mbox{   for }\ 0\ <\ \beta\ <\ p.
\end{equation}
Moreover, for every $p\ >\ 0$ there are mappings that satisfy the assumption of theorem, yet fail (\ref{main result}) for $\beta\ =\ p$.
\end{theorem}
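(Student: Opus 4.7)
The plan is to prove (\ref{main result}) by reducing it to an integral on $\Omega$ via the change of variables, bounding that integral through a Young-type inequality for the Orlicz pair generated by $\exp[\mathcal{A}_{p,n}(\cdot)]$, and finally to exhibit a radial stretching that shows $\beta=p$ cannot be attained.

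\textbf{Step 1 (reduction).} Hypothesis (\ref{main condition}) places $K_f$ in an Orlicz class strong enough for the regularity theory of mappings of subexponentially integrable distortion, developed by Kauhanen--Koskela--Mal\'y and Hencl--Koskela and extended to the logarithm-iterated setting in \cite{gill2010planar}. From that theory one gets $f\in W^{1,2}_{loc}(\Omega)$, $J_f>0$ almost everywhere, Lusin's conditions N and N${}^{-1}$ for both $f$ and $f^{-1}$, and $f^{-1}\in W^{1,2}_{loc}(f(\Omega))$. A direct matrix computation based on $Df^{-1}(f(z))=Df(z)^{-1}$ and the planar identity $|A^{-1}|=|A|/\det A$ yields the pointwise relation $K_{f^{-1}}(f(z))=K_f(z)$ a.e., so the area formula converts (\ref{main result}) into the equivalent statement that, for every compact $E\subset\Omega$ and every $\beta\in(0,p)$,
\[
\int_{E}\Bigl[\log_{(n)}\bigl(e^{e^{\iddots^{e}}}+K_f(z)\bigr)\Bigr]^{\beta}J_f(z)\,dz<\infty.
\]

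\textbf{Step 2 (main estimate).} Write $\Psi(t)=\log_{(n)}(e^{e^{\iddots^{e}}}+t)$, so $\Psi(t)\sim\log_{(n)}t$ for $t$ large. On $\{K_f\le M\}$ the integrand is dominated by $\Psi(M)^{\beta}J_f$, which is integrable. On the tail $\{K_f>M\}$, I apply Young's inequality $ab\le\Phi(a)+\Phi^{*}(b)$ for the conjugate Orlicz pair generated by $\Phi(t)=\exp[\mathcal{A}_{p,n}(t)]$, with the choice $a=K_f$ and $b=\Psi(K_f)^{\beta}J_f/K_f$. The first summand $\Phi(K_f)$ integrates by (\ref{main condition}). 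For the second, the asymptotics of $\Phi^{-1}$ are dictated by the iterated-logarithmic denominator appearing in the definition (\ref{A_{p,n}}) of $\mathcal{A}_{p,n}$, and combined with the distortion equality $|Df|^{2}=K_fJ_f$ and the improved regularity of $|Df|$ and $J_f$ available under (\ref{main condition}), one can absorb the conjugate term into the original integrand up to an integrable remainder. The strict inequality $\beta<p$ furnishes the crucial asymptotic margin that makes this absorption constant strictly less than one.

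\textbf{Step 3 (sharpness).} For the last assertion I would adapt Gill's radial stretching construction. Let $f(z)=(z/|z|)\rho(|z|)$ on a small disk, with $\rho$ decreasing near the origin chosen so that $K_f(z)$ is asymptotic to the appropriate iterated logarithm of $1/|z|$. This produces a mapping for which (\ref{main condition}) holds tightly and for which a direct computation using $K_{f^{-1}}=K_f\circ f^{-1}$ shows that $[\log_{(n)}(e^{e^{\iddots^{e}}}+K_{f^{-1}})]^{p}$ fails to integrate in any neighbourhood of $f(0)$. The computation parallels the $n=0$ sharpness example in \cite{gillJMAA}.

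\textbf{Principal obstacle.} Step 2 is the analytic heart: the asymptotic expansion of $\Phi^{*}$ for $\Phi=\exp[\mathcal{A}_{p,n}(\cdot)]$ has to be carried out with care because of the nested logarithmic factors, and one has to verify that the absorption coefficient is strictly below $1$ throughout the range $\beta<p$. Tracking these logarithmic factors, and estimating the residual contribution via the distortion equality, is where the bulk of the technical work concentrates.
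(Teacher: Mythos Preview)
Your Step~1 reduction is essentially the paper's, but one factual slip: under (\ref{main condition}) you do \emph{not} get $f\in W^{1,2}_{loc}$; in the subexponential regime one only has $f\in W^{1,P_n}_{loc}$ with $P_n(t)=t^2/\mathcal{A}_{p,n}^{-1}(\log t^2)$, strictly below $L^2$. This does not affect the change-of-variables argument, but it matters for what regularity of $J_f$ you may invoke later.

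The genuine gap is in Step~2. Your Young pairing $\Phi(t)=\exp[\mathcal{A}_{p,n}(t)]$, $a=K_f$, $b=\Psi(K_f)^\beta J_f/K_f$ leaves you with $\Phi^*(b)$, and you propose to ``absorb the conjugate term into the original integrand'' with a coefficient $<1$. But $\Phi^*(s)\sim \tfrac{1}{p}\,s\log s\cdots\log_{(n+1)} s$, and when you feed in $b$ the iterated logarithms pick up contributions from $\log J_f$ as well as from $\log K_f$; there is no a~priori relation between the two that lets you dominate $\Phi^*(b)$ by a fraction of $\Psi(K_f)^\beta J_f$ plus something integrable. An absorption scheme of this type does not close without an additional input on the size of $J_f$ itself.

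That missing input is precisely what the paper uses. The paper's Young inequality is set up differently: it takes $t=[\log_{(n)}(e^{e^{\iddots^{e}}}+K_f)]^\beta$ and $s=J_f$ with $\Phi(t)=\exp[\mathcal{A}_{p,n}(\exp_{(n)}(t^{1/\beta}))]$, so that $\Phi(t)\lesssim\exp[\mathcal{A}_{p,n}(K_f)]$ directly, while the complementary function comes out as $\Phi^*(s)\sim s[\log_{(n+1)} s]^\beta$. The second piece is then $J_f[\log_{(n+1)}(e^{e^{\iddots^{e}}}+J_f)]^\beta$, whose local integrability for $\beta<p$ is a \emph{separate} higher-integrability result for the Jacobian due to Gill (the paper's Lemma~\ref{proposition}). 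There is no absorption at all: the integral splits into two terms each of which is already known to be finite. Your proposal never identifies this Jacobian self-improvement as the decisive external lemma, and without it the argument does not go through.

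Your sharpness Step~3 is in the right spirit; the paper uses the same Kovalev-type radial map with an explicit $\rho$ involving $[\log_{(n+1)}(1/t)]^{-p/2}[\log_{(n+2)}(1/t)]^{-1/2}$.
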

The $e^{e^{\iddots^{e}}}$ in functions $\log_{(i)}\left(e^{e^{\iddots^{e}}}\ +\ x\right)$ means $\exp_{(i-1)}(e)$ if there is not special announcement.

The rest of the paper is organized as follows. In section 2, we recall some basic facts about Legendre Transformation and obtain an inequality of Young type. The section 3 is devoted to the proof of Theorem \ref{main theorem}.

\subsection*{Notation}
$s\ \gg \ 1$ denotes $s$ is sufficiently large and $s\ \ll \ 1$ denotes $s$ is sufficiently small.
$f(x)\ \lesssim \ g(x)$ and $f(y)\ \gtrsim \ g(y)$ mean that there exist constants $M$ and $m$ such that $f(x)\ \le \ Mg(x)$ and $f(y)\ \ge \ mg(y)$ for suitable $x$ and $y$. $f(x)\ \sim \ g(x)$ means $f(x)\ \lesssim \ g(x)$ and $f(x)\ \gtrsim \ g(x)$.
When concerned only with the convergence of improper integrals, we use notations $\int_{*}^{\infty}$ and $\int_{0}^{*}.$

\section{An inequality of Young type}
We begin by recalling some basic facts about Legendre Transformation from \cite{gtm60}.

Suppose function $\Phi(t)$ is convex and $\Phi''(t)\ > \ 0$ for $t\ \ge \ 0$, the Legendre Transformation of $\Phi(t)$ is
$$\Phi ^{*}(s) \ = \ \underset{ t \ge 0}{\mbox{max}} \left\{ s t \ - \ \Phi(t)\right\}\ \ \mbox{for}\ \ s\ \ge \ 0.$$
Directly from the definition, we obtain
\begin{equation}\label{from definition}
t s\ \le\ \Phi(t) \ + \ \Phi^{*}(s)\ \ \mbox{ for }\ t,\ s\ \ge\ 0.
\end{equation}

\begin{lemma}\label{remark 1}
$(\Phi^{*})'(s)\ = \ (\Phi')^{-1}(s).$
\end{lemma}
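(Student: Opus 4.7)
The plan is to unfold the definition of the Legendre transform, identify the maximizer as a function of $s$, and then differentiate using the envelope-style cancellation that is characteristic of Legendre duality.

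First I would observe that the hypothesis $\Phi''(t) > 0$ forces $\Phi'$ to be strictly increasing on $[0,\infty)$, so it has a well-defined inverse $(\Phi')^{-1}$ on its range. Consequently, for each fixed $s \geq 0$, the map $t \mapsto st - \Phi(t)$ is strictly concave, so the maximum in the definition of $\Phi^*(s)$ is attained at a unique point $t^\ast(s)$ which is the critical point
\begin{equation*}
\Phi'(t^\ast(s)) = s, \qquad \text{i.e.} \qquad t^\ast(s) = (\Phi')^{-1}(s).
\end{equation*}
The implicit function theorem, applied to $\Phi'(t) - s = 0$ with non-vanishing $t$-derivative $\Phi''(t) > 0$, guarantees that $t^\ast(s)$ is differentiable in $s$.

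Next I would substitute this maximizer back in to write
\begin{equation*}
\Phi^\ast(s) = s\, t^\ast(s) - \Phi(t^\ast(s))
\end{equation*}
and differentiate using the chain rule:
\begin{equation*}
(\Phi^\ast)'(s) = t^\ast(s) + s\, (t^\ast)'(s) - \Phi'(t^\ast(s))\, (t^\ast)'(s).
\end{equation*}
The defining identity $\Phi'(t^\ast(s)) = s$ makes the last two terms cancel, leaving $(\Phi^\ast)'(s) = t^\ast(s) = (\Phi')^{-1}(s)$, as claimed.

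There is no real obstacle here; the only subtle point is justifying that the argmax $t^\ast(s)$ is a smooth function of $s$ so that the chain rule applies, and this is handled by the strict convexity assumption $\Phi'' > 0$ via the implicit function theorem. One could alternatively avoid differentiating $t^\ast$ altogether by invoking the envelope theorem, which yields $(\Phi^\ast)'(s) = \partial_s(st - \Phi(t))\big|_{t = t^\ast(s)} = t^\ast(s)$ directly; either route gives the identity in one line after the maximizer has been identified.
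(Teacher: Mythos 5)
Your argument is correct and follows essentially the same route as the paper: identify the maximizer $t^\ast(s)$ via the critical point equation $\Phi'(t^\ast(s))=s$, substitute back, differentiate, and use that identity to cancel the terms involving $(t^\ast)'(s)$. The extra remarks on the implicit function theorem and the envelope theorem are fine but just make explicit what the paper leaves implicit.
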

\begin{proof}
Given $s\ \ge \ 0,$ let $t(s)$ be the value such that the maximal of $ st\ - \ \Phi(t)$ is obtained, i.e.
$$\Phi^{*}(s)\ =\ s t(s)\ -\ \Phi(t(s)),$$
so $\Phi'(t(s))\ =\ s$, then $t(s)\ =\ (\Phi')^{-1}(s)$.
Consequently, we have
\begin{align*}
(\Phi^{*})'(s)&\ =\ t(s)\ +\ (s\ -\ \Phi'(t(s)))\frac{d(\Phi')^{-1}(s)}{ds} \\
&\ =\ (\Phi')^{-1}(s).\\
\end{align*}
\end{proof}

Given a strictly convex $C^2$ function $\Phi(t)$, it is not easy to compute the explicit expression of $\Phi^{*}(s)$ from the definition. However, by Lemma \ref{remark 1}, we can obtain the asymptotical behaviour of $\Phi^{*}(s)$ as $s\  \gg\ 1$. The following example, coming from \cite{giannetti2010the}, illustrates this.

\begin{example}\label{example}
Put $\Phi(t)\ =\ \exp\left( \frac{t}{\log(e\ +\ t)}\right)$.
After differentiating and taking the logarithm,
we have
$$\log (\Phi'(t)) \ \sim \ \frac{t}{\log (t)}\ \ \mbox{ as }\  t \gg 1.$$
Let $\frac{t}{\log (t)} \ = \ \log(s)$, then
\begin{equation}\label{example-1}
t\ \sim \ \log(s) \log_{(2)}(s) \ \ \mbox{ as }\ s\ \gg \ 1.
\end{equation}
In other words, when $t$ satisfies (\ref{example-1}), we have
$$\log (\Phi'(t)) \ \sim \ \log(s) \ = \  \log[\Phi'((\Phi^{*})'(s))].$$
By the monotonicity of $\log(\cdot)$ and $\Phi'(\cdot)$, we have
$$(\Phi^{*})'(s)\ \sim\ \log(s) \log_{(2)}(s).$$
Hence, by the Newton-Leibniz formula, we show
$$\Phi^{*}(s)\ \sim \ s \log(s) \log_{(2)}(s)\ \ \mbox{ as } \ s \ \gg \ 1.$$
\end{example}

By the method analogous to Example \ref{example}, we present an inequality of Young type, which plays the crucial role in the proof of Theorem \ref{main theorem}.

\begin{lemma}\label{Young}
Given $\beta \ > \ 0$, there exist constants $C_{1},\ C_{2}\ >\ 0 $ such that
\begin{equation*}
ts\ \le \ C_{1} \Phi(t)\ +\ C_{2}\Psi(s) \ \ \mbox{ for }\ t,\ s \ \ge \ 0,
\end{equation*}
where $\Phi(t)\ =\ \exp[\mathcal{A}_{p,n}(\exp_{(n)}(t^{\frac{1}{\beta}}))]$ and
$\Psi(s)\ =\ s\left[\log_{(n+1)}\left(e^{e^{\iddots^{e}}}\ +\ s\right)\right]^{\beta}.$
\end{lemma}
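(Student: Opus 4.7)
The plan is to deploy the Legendre-transform machinery of Lemma \ref{remark 1} and Example \ref{example}, with the roles of $\Phi$ and $\Phi^{*}$ played by the stated $\Phi$ and (asymptotically) by $\Psi$. First I would check that $\Phi(t)=\exp[\mathcal{A}_{p,n}(\exp_{(n)}(t^{1/\beta}))]$ is strictly convex and $C^2$ for $t$ sufficiently large; this is immediate, since $\mathcal{A}_{p,n}(x)$ grows almost linearly in $x$ and the inner composition is an iterated exponential, so $\Phi''>0$ eventually. For small $t$ one may either modify $\Phi$ on a bounded set to make it convex everywhere (which adds only bounded error), or note that $\Phi(0)>0$ while $\Psi$ vanishes at $0$, so any bounded defect can be absorbed into $C_1$ by choosing it large.

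Next, mirroring Example \ref{example}, I would determine the asymptotic behaviour of $(\Phi^{*})'(s)=(\Phi')^{-1}(s)$. Writing $u=t^{1/\beta}$ and $v=\exp_{(n)}(u)$, differentiation followed by taking the logarithm gives
\begin{equation*}
\log \Phi'(t)\ \sim\ \mathcal{A}_{p,n}(v)\ \sim\ \frac{pv}{\prod_{k=1}^{n}\log_{(k)}(v)}\qquad\mbox{as }\ t\gg 1,
\end{equation*}
because the chain-rule prefactor $\mathcal{A}'_{p,n}(v)\cdot(\exp_{(n)})'(u)\cdot u'(t)$ has logarithm of order $\log v=\exp_{(n-1)}(u)$, which is negligible compared to $\mathcal{A}_{p,n}(v)$. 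Using $\log_{(k)}(v)=\exp_{(n-k)}(u)$ and setting $\log\Phi'(t)=\log s$, I then take $n+1$ successive logarithms; the term $\exp_{(n-1)}(u)$ dominates $\log\mathcal{A}_{p,n}(v)$, and iterating the logarithm $n-1$ further times telescopes the relation to $u\sim\log_{(n+1)}(s)$. By Lemma \ref{remark 1} this yields $(\Phi^{*})'(s)\sim u^{\beta}\sim[\log_{(n+1)}(s)]^{\beta}$, and an application of the Newton--Leibniz formula gives
\begin{equation*}
\Phi^{*}(s)\ \sim\ s\bigl[\log_{(n+1)}(s)\bigr]^{\beta}\ \sim\ \Psi(s)\qquad\mbox{as }\ s\gg 1,
\end{equation*}
since the shift by $e^{e^{\iddots^{e}}}$ inside the outer logarithm of $\Psi$ does not alter the leading order.

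Finally I would insert these estimates into (\ref{from definition}): for $s$ large, $ts\le\Phi(t)+\Phi^{*}(s)\le\Phi(t)+C_2\Psi(s)$, and for $s$ in a bounded range the additive remainder is absorbed into $C_1\Phi(t)$ using $\inf_{t\ge 0}\Phi(t)>0$. The main obstacle I expect is the careful asymptotic inversion of $\log\Phi'(t)=\log s$: one must track the iterated-logarithm error terms through $n$ successive logarithms and confirm that the subleading pieces of $\mathcal{A}_{p,n}$ contribute only $o(1)$ corrections rather than accumulating into a qualitatively different rate. Once the clean asymptotic $u\sim\log_{(n+1)}(s)$ has been justified, the rest is a mechanical invocation of Lemma \ref{remark 1} and the Young inequality (\ref{from definition}).
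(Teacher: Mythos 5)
Your proposal is correct and follows essentially the same route as the paper: both compute the asymptotics of $\log\Phi'(t)$, invert via Lemma \ref{remark 1} to get $(\Phi^{*})'(s)\sim[\log_{(n+1)}(s)]^{\beta}$, integrate to conclude $\Phi^{*}(s)\sim s[\log_{(n+1)}(s)]^{\beta}\lesssim\Psi(s)$, apply (\ref{from definition}) for large $s$, and absorb the bounded-$s$ range into $C_{1}\Phi(t)$. The only cosmetic difference is that you perform the inversion by telescoping $n+1$ successive logarithms directly, whereas the paper sandwiches $\log\Phi'(t)$ between $\exp_{(n)}(t^{1/\beta})/\exp_{(n-1)}(t^{1/\beta})$ and $\exp_{(n)}(t^{1/\beta})/[\exp_{(n-1)}(t^{1/\beta})]^{2}$ and reduces to Example \ref{example}; both are sound.
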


\begin{proof}
We divide the proof into two cases.

\bigskip
  \item [\textsf{Case} 1:]
$0\ \le\ s\ \le \ C_{1}$ for some $C_1 \ >\ 0$.

Since $t\ \le \ \Phi(t)$ for $t\ \ge \ 0$ and $\Psi(s) \ \ge \ 0$ for $s\ \ge \ 0$, we obtain

\begin{equation}\label{Young 1}
st\ \le\  C_1 \Phi (t)\ +\ \Psi(s)\ \ \mbox{ for }\ t\ \ge \ 0   \ \mbox{ and }\ 0\ \le \ s\ \le \ C_{1}.
\end{equation}

\bigskip
  \item [\textsf{Case} 2:]
$s \ \gg \ 1$.

Since
$$\log \Phi'(t) \ \sim \ \frac{ \exp_{(n)}(t^{\frac{1}{\beta}})}{ \exp_{(n-1)}(t^{\frac{1}{\beta}}) \cdots  \exp_{(1)}(t^{\frac{1}{\beta}}) t^{\frac{1}{\beta}}}\ \ \mbox{ for }\ t\ \gg \ 1$$
and
$$\exp_{(n-2)}(t^{\frac{1}{\beta}}) \cdots \exp_{(1)}(t^{\frac{1}{\beta}}) t^{\frac{1}{\beta}} \ <\ \exp_{(n-1)}(t^{\frac{1}{\beta}})\ \ \mbox{ for }\ t \ \ge \  0,$$
we have

\begin{equation}\label{Young 2}
 \frac{ \exp_{(n)}(t^{\frac{1}{\beta}})}{\exp_{(n-1)}(t^{\frac{1}{\beta}})} \ >\ \log \Phi'(t) \ > \ \frac{ \exp_{(n)}(t^{\frac{1}{\beta}})}{\left[ \exp_{(n-1)}(t^{\frac{1}{\beta}})\right]^{2}} \ \ \mbox{ for }\ t\ \gg \ 1.
\end{equation}

Next consider RHS of (\ref{Young 2}). Let $b\ =\ \exp_{(n)}(t^{\frac{1}{\beta}}),$ we consider
\begin{equation}\label{Young 2-0}
\frac{b}{[\log(b)]^{2}}\ =\ \log(s),\ \mbox{ i.e. }\ \frac{b^{\frac{1}{2}}}{\log({b^{\frac{1}{2}})}}\ =\ \sqrt{4\log(s)}.
\end{equation}
By Example \ref{example}, we have
$$b^{\frac{1}{2}} \ \sim \ \sqrt{\log(s)} \log_{(2)}(s) \ \ \mbox{ as } \ s \ \gg \ 1.$$
Taking $n$ successive logarithms,
we have
\begin{equation}\label{Young 2-1}
t \ \sim \ [\log_{(n+1)}(s)]^{\beta} \ \ \mbox{ as }\ s\ \gg \ 1.
\end{equation}
In other words, when $t$ satisfies (\ref{Young 2-1}), it follows from (\ref{Young 2-0}) and RHS of (\ref{Young 2}) that
$$\log(\Phi'(t))\ > \ \log(s)\ =\ \log[\Phi'((\Phi^{*})'(s))].$$
So, by the monotonicity of $\log(\cdot)$ and $\Phi'(\cdot)$, we have
\begin{equation}\label{Young 3}
(\Phi^{*})'(s)\ \lesssim \ [\log_{(n+1)}(s)]^{\beta} \ \ \mbox{ as }\ s \ \gg \ 1.
\end{equation}

For LHS of (\ref{Young 2}), by the argument similar to the one used in RHS of (\ref{Young 2}), we obtain
\begin{equation}\label{Young 4}
(\Phi^{*})'(s)\ \gtrsim \ [\log_{(n+1)}(s)]^{\beta} \ \ \mbox{ as }\ s\ \gg \ 1.
\end{equation}

Combining (\ref{Young 3}) and (\ref{Young 4}), we obtain
$$(\Phi^{*})'(s)\ \sim \ [\log_{(n+1)}(s)]^{\beta} \ \ \mbox{ as }\ s \ \gg \ 1.$$
Hence, by the Newton-Leibniz formula, we get
\begin{equation}\label{Young 4-1}
\Phi ^{*}(s)\ \sim \ s[\log_{(n+1)}(s)]^{\beta}\ <\ \Psi(s) \ \ \mbox{ as } \ s \ \gg \  1.
\end{equation}
It follows from (\ref{from definition}) and (\ref{Young 4-1}) that there exists constant $C_2 \ >\ 0$ such that
\begin{equation}\label{Young 5}
ts\ \le \ \Phi (t)\ +\ C_2 \Psi(s)\ \ \mbox{ for }\ t \ \ge \ 0\ \mbox{ and }\ s \ \gg \ 1.
\end{equation}

Combining (\ref{Young 1}) and (\ref{Young 5}), we complete the proof.
\end{proof}

\section{Proof of Theorem \ref{main theorem}}
We begin with four lemmas.

\begin{lemma}[\cite{kauhanen2003mapping}, Theorem 1.1.]\label{lusin N}
Suppose that $\Psi$ is a strictly increasing, differentiable function and satisfies
\begin{equation}\label{C-1}
\int_{1}^{\infty}\frac{\Psi'(t)}{t}dt \ =\ \infty , \tag{C\ --\ 1}
\end{equation}
\begin{equation}\label{C-2}
\lim\limits_{t \rightarrow \infty}t\Psi'(t)\ =\ \infty. \tag{C\ --\ 2}
\end{equation}
Let $f:\Omega \rightarrow \mathbb{R}^{n}$ be a mapping of finite distortion and the distortion function $K_f$ satisfies $\exp(\Psi(K_f))\in L^1_{loc}(\Omega)$. Then $f$ satisfies the Lusin's condition (N), i.e. $f(E)$ has Lebesgue measure zero if $E$ has Lebesgue measure zero.
\end{lemma}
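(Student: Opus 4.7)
The plan is to establish Lusin's (N) via a quantitative area inequality on balls: for every ball $B(x_{0},r)$ with $B(x_{0},2r)\Subset\Omega$, one exhibits a bound of the form
\[
|f(B(x_{0},r))| \ \lesssim\ \int_{B(x_{0},2r)} J_{f}\,dx\ +\ \eta(r),
\]
where the error $\eta(r)$ is absolutely continuous with respect to Lebesgue measure on the domain. Once such an inequality is in hand, a standard Vitali covering argument applied to a measure-zero set $E$, combined with $J_{f}\in L^{1}_{loc}$ and the absolute continuity of $\eta$, forces $|f(E)|=0$.

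To reach the area inequality I would first record a capacity/oscillation estimate obtained in polar coordinates around $x_{0}$,
\[
\mathrm{osc}_{B(x_{0},r)}(f)^{n} \ \lesssim\ \int_{B(x_{0},2r)} |Df(x)|^{n}\, w(|x-x_{0}|)\, dx,
\]
for a nonnegative radial weight $w$ produced by spherical integration. The distortion equality $|Df|^{n}=K_{f}J_{f}$ converts the right-hand side into an integral of $K_{f}\cdot (J_{f}w)$, which I would split via a Young-type (Legendre transform) inequality analogous in spirit to Lemma \ref{Young}:
\[
K_{f}\cdot (J_{f}w) \ \le\ C_{1}\exp(\Psi(K_{f}))\ +\ C_{2}\,\Phi(J_{f}w),
\]
where $\Phi$ is the Orlicz conjugate determined by the Legendre transform $\Psi^{*}$ of $\Psi$. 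The first summand integrates to a finite quantity by hypothesis; the role of condition (C-1) is precisely that $\Phi$ grows sufficiently slowly for $\Phi(J_{f}w)$ to be controlled by $J_{f}$ up to an absolutely continuous remainder, so that the oscillation bound collapses to $C\int_{B(x_{0},2r)}J_{f}+\eta(r)$.

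Condition (C-2) enters at two earlier points. First, it ensures that $\Psi$ grows fast enough for $\Psi^{*}$ to exist with the correct asymptotics, so that the pointwise Young inequality above is actually applicable in the required form. Second, together with the exponential integrability of $\Psi(K_{f})$, (C-2) is classically what upgrades the $W^{1,1}_{loc}$ hypothesis to a continuous representative and a.e.\ differentiability of $f$---both indispensable in order to interpret $\mathrm{osc}_{B(x_{0},r)}(f)$ and to identify $|f(B)|$ with the $n$-measure of the image $f(B)$.

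The principal technical obstacle is the joint calibration of the radial weight $w$ in the modulus estimate and the Legendre partner $\Phi$: the hypothesis (C-1), in the form $\int_{1}^{\infty}\Psi'(t)/t\,dt=\infty$, must be arranged to defeat precisely the logarithmic loss produced by polar coordinates in dimension $n$, so that $\eta(r)$ genuinely vanishes as the cover is refined. Once this balance is struck, the remainder of the argument---covering $E$ by finitely many balls of small total mass, summing the area estimates, and verifying the measurability of $f(E)$---is routine.
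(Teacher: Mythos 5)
You should first note that the paper does not prove Lemma \ref{lusin N} at all: it is quoted verbatim, as a black box, from \cite{kauhanen2003mapping} (Theorem 1.1 there), so there is no internal argument to compare with, and the only question is whether your sketch could stand on its own. It cannot, because the central object of your plan --- the ball inequality $|f(B(x_0,r))|\lesssim \int_{B(x_0,2r)}J_f\,dx+\eta(r)$ with an error ``absolutely continuous with respect to Lebesgue measure'' --- is precisely the hard content of the theorem, and the route you describe does not produce it. The pointwise Young splitting $K_f\,(J_f w)\le C_1\exp(\Psi(K_f))+C_2\,\Phi(J_f w)$, with $\Phi$ conjugate to $t\mapsto\exp(\Psi(t))$, leaves you with an integral of $\Phi(J_f w)$, which grows like $s$ times an unbounded factor of $\log s$; its local integrability is \emph{not} a consequence of $J_f\in L^1_{loc}$ but is itself a higher-integrability statement of the same depth as Lemma \ref{thm20.5.1} and Lemma \ref{proposition}, which your sketch silently assumes. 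Likewise, the oscillation estimate $\mathrm{osc}_{B(x_0,r)}(f)^n\lesssim\int_{B(x_0,2r)}|Df|^n w$ is not available off the shelf: what one actually has are estimates of the oscillation over well-chosen spheres by $\int_{S_r}|Df|$, and passing from oscillation to the measure of the image requires continuity and openness/discreteness (or monotonicity) of $f$, which under subexponential distortion are themselves nontrivial theorems that must be established first (your attribution of continuity and a.e.\ differentiability to (C--2) is also off; these come from the Orlicz--Sobolev regularity $f\in W^{1,P}_{loc}$). Finally, $\eta(r)$ is a function of the radius, not a measure, so ``absolutely continuous'' is not yet a usable hypothesis: for the Vitali argument you would need something like $\sum_i\eta(r_i)$ small whenever $\sum_i r_i^n$ is small, and nothing in the sketch delivers that.

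The decisive gap is the one you explicitly defer: ``the joint calibration of $w$ and $\Phi$ so that (C--1) defeats the logarithmic loss'' is not a technical obstacle to be struck later --- it \emph{is} the theorem. The divergence condition (C--1) is known to be sharp: when it fails there exist mappings of finite distortion with $\exp(\Psi(K_f))\in L^1_{loc}$ and $J_f\in L^1_{loc}$ which violate condition (N). Consequently no argument using only $J_f\in L^1_{loc}$, a Young inequality, and a covering lemma can yield an area estimate with a harmless error term; the quantitative use of (C--1) has to enter the derivation of the estimate itself, and your proposal gives no mechanism for it. The actual proof in \cite{kauhanen2003mapping} proceeds by establishing the Orlicz--Sobolev regularity of $f$, the coincidence of the distributional and pointwise Jacobians, continuity, openness and discreteness, and only then condition (N); if you intend to prove the lemma rather than cite it, that is the chain you would have to reproduce.
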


Given a mapping $f:\Omega \rightarrow \mathbb{R}^{n}$, we denote $N(f, \Omega, y)$ by the number of preimages of point $y$ in $\Omega$ under $f$. We say $f$ has essentially bounded multiplicity, if $N(f, \Omega, y)$ is bounded for a.e. $y\ \in \ \mathbb{R}^n $.

From the proof of Theorem 1.2 in \cite{koskela2003mapping}, we know the assertion of Theorem 1.2 in \cite{koskela2003mapping} remains valid if both the mapping and its distortion function lie in local Sobolev spaces. So, we have the following result.

\begin{lemma}\label{Lusin N^-1}
Let $f: \Omega \rightarrow \mathbb{R}^2$ be a mapping of finite distortion and the distortion function $K_f$ satisfies $K_f \in L^1_{loc}(\Omega)$. If $f$ has essentially bounded multiplicity and $f$ is not a constant, then $J_{f} \ > \ 0$ almost everywhere in $\Omega$.
\end{lemma}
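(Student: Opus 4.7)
The plan is to follow the strategy of Koskela and Mal\'y \cite{koskela2003mapping}: reduce the claim to two ingredients, the area formula and Lusin's condition (N${}^{-1}$) for $f$. Put $Z = \{x \in \Omega : J_f(x) = 0\}$; the goal is to show $|Z| = 0$.

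First, I would apply the area formula to $Z$ in the form
\begin{equation*}
\int_{Z} J_f(x)\, dx \;=\; \int_{f(Z)} N(f, Z, y)\, dy.
\end{equation*}
The left-hand side vanishes by the definition of $Z$, while $N(f, Z, y) \ge 1$ for every $y \in f(Z)$, hence $|f(Z)| = 0$. The required change-of-variables identity is standard for planar mappings of finite distortion with $K_f \in L^1_{loc}$; the distortion inequality together with H\"older's inequality yields enough Sobolev regularity to invoke the general area formula, and in the nonconstant case essentially bounded multiplicity gives the sharp version above.

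Next, I would verify Lusin's condition (N${}^{-1}$) for $f$, namely that $|f^{-1}(E)| = 0$ whenever $E \subset f(\Omega)$ satisfies $|E| = 0$. This is the technical heart of Theorem 1.2 of \cite{koskela2003mapping}. The argument there proceeds by assuming for contradiction that $|f^{-1}(E)| > 0$, cutting off by level sets where $|Df|$ and $K_f$ are controlled, and then deriving a capacitary bound that is incompatible with the combination of $K_f \in L^1_{loc}$, essentially bounded multiplicity, and nontriviality of $f$. As indicated in the remark preceding the lemma, a careful reading of that proof shows that only local Sobolev membership of $f$ and of $K_f$ is exploited, so the conclusion transfers to the present setting.

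With (N${}^{-1}$) in hand, the conclusion is immediate: $Z \subseteq f^{-1}(f(Z))$ combined with $|f(Z)| = 0$ forces $|Z| = 0$. I expect the verification of (N${}^{-1}$) to be the main obstacle, since it requires the subtle capacitary estimate from \cite{koskela2003mapping} and a reading of that argument that identifies exactly which Sobolev assumption is genuinely needed. The area formula and the final set-theoretic deduction are then routine.
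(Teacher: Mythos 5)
Your reduction---area formula plus condition (N${}^{-1}$) implies $|\{J_f=0\}|=0$---is logically sound, but the step carrying all the weight is not actually proved, and the way you propose to obtain it is effectively circular. The paper's own treatment of this lemma is simply to invoke Theorem 1.2 of \cite{koskela2003mapping}, whose statement \emph{is} this lemma (with global rather than local integrability hypotheses), after observing that the proof there only uses local membership. That theorem is proved directly: on $Z=\{J_f=0\}$ the distortion inequality forces $Df=0$ a.e., and at a density point of $Z$ Koskela and Mal\'y derive a contradiction with nonconstancy through an oscillation/capacity-type estimate in which $K_f\in L^1$ and the essentially bounded multiplicity enter via H\"older's inequality and the area formula; it does not pass through condition (N${}^{-1}$) for arbitrary null sets. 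On the contrary, in the literature (see the relevant part of \cite{hencl2014lectures}) condition (N${}^{-1}$) for such mappings is \emph{deduced from} $J_f>0$ a.e.\ by exactly the area-formula computation you run in reverse. So when you declare (N${}^{-1}$) to be ``the technical heart of Theorem 1.2'' and defer it to a careful reading of that proof, you are in effect assuming a statement equivalent to the lemma being proved, and your proposal supplies no independent argument for it. If one is going to rely on the Koskela--Mal\'y proof anyway, the efficient route is the paper's: quote its conclusion, checking (as the paper does) that only local integrability of $K_f$ and local Sobolev regularity of $f$ are used, rather than extracting a hypothetical intermediate result and re-deriving the conclusion from it.

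A secondary, fixable imprecision: the exact area formula $\int_Z J_f\,\mathrm{d}x=\int N(f,Z,y)\,\mathrm{d}y$ presupposes Lusin's condition (N) on $Z$, which is not available for a general mapping of finite distortion with merely $K_f\in L^1_{loc}$ (it is exactly what Lemma \ref{lusin N} provides under the much stronger subexponential assumption, not here). The standard substitute is the decomposition of the set of approximate differentiability into countably many pieces on which $f$ is Lipschitz, which yields $|f(Z\setminus N_0)|=0$ for some Lebesgue null set $N_0$; this weaker conclusion still suffices for your final set-theoretic step once (N${}^{-1}$) is available, so this issue is minor compared with the gap described above.
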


Suppose that a function $\mathcal{A}$ has the properties :
\begin{equation}\label{A-1}
\mathcal{A}:[1,\infty) \rightarrow [0,\infty) \mbox{ is a smooth increasing function with } \mathcal{A}(1)=0. \tag{A--1}
\end{equation}
\begin{equation}\label{A-2}
\int_{1}^{\infty} \frac{\mathcal{A}(t)}{t^2}\, \mathrm{d}t =\infty. \tag{A--2}
\end{equation}
The associated function of $\mathcal{A}$ is denoted by
\begin{equation}\label{P}
P(t)\ =\
\begin{cases}
t^{2},  & 0\ \le \ t\ \le \ 1 \\
\frac{t^{2}}{\mathcal{A}^{-1}(logt^{2})}, & t\ \ge \ 1.
\end{cases}
\end{equation}
Let us recall the notation
$$W^{1,Q}_{loc}\ = \ \left\{ f \in W^{1,1}_{loc}(\Omega)\ :\ Q(|Df|)\in L^{1}_{loc}(\Omega) \right\}.$$

\begin{lemma}[\cite{astala2009elliptic}, Theorem 20.5.1.]\label{thm20.5.1}
Given a function $\mathcal{A}$ satisfying (\ref{A-1}) and (\ref{A-2}) and the associated function $P$ is defined by (\ref{P}).
Let $f: \Omega \rightarrow \mathbb{R}^2$ be a mapping of finite distortion and the distortion function $K_{f}$ satisfies $exp[\mathcal{A}(K_{f})] \in L^{1}_{loc}(\Omega)$, then
$$f\ \in\ W^{1,P}_{loc}(\Omega).$$
\end{lemma}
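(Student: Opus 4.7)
The plan is to reduce the Orlicz--Sobolev integrability of $|Df|$ to a pointwise inequality separating $K_f$ from $J_f$, and then integrate. Fix a relatively compact subdomain $\Omega'\Subset\Omega$; since $P(t)=t^{2}\le 1$ for $0\le t\le 1$, it suffices to bound the integral $\int_{\Omega'\cap\{|Df|^{2}\ge 1\}}P(|Df|)\,\mathrm{d}z$. On this set, the distortion equality~\eqref{distortion equality} rewrites $|Df|^{2}=K_{f}J_{f}$ and thus
$$P(|Df|)\ =\ \frac{K_{f}\,J_{f}}{\mathcal{A}^{-1}\bigl(\log(K_{f}J_{f})\bigr)}.$$

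The technical core of the argument is the pointwise bound
$$\frac{u\,v}{\mathcal{A}^{-1}(\log(uv))}\ \le\ \exp[\mathcal{A}(u)]\ +\ v\qquad(u\ge 1,\ v\ge 0,\ uv\ge 1).$$
I would derive this from the Fenchel--Young inequality for the complementary Orlicz pair associated with $\Phi(t)=\exp[\mathcal{A}(t)]$. A Legendre-transform computation in the spirit of Example~\ref{example}, starting from $\Phi'(t)=\mathcal{A}'(t)\exp[\mathcal{A}(t)]$ and inverting $\log\Phi'(t)\sim\mathcal{A}(t)$, yields $(\Phi^{*})'(s)\sim\mathcal{A}^{-1}(\log s)$ and hence $\Phi^{*}(v)\lesssim v\,\mathcal{A}^{-1}(\log v)$ as $v\gg 1$. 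Then $uv\le\Phi(u)+\Phi^{*}(v)$, combined with the monotonicity of $\mathcal{A}^{-1}$ together with the fact that $\mathcal{A}^{-1}(\log(uv))\ge\mathcal{A}^{-1}(0)=1$ by (A--1) whenever $uv\ge 1$, produces the desired pointwise estimate after dividing through by $\mathcal{A}^{-1}(\log(uv))$; the absorption uses $\log J_{f}\le\log(K_{f}J_{f})$ because $K_{f}\ge 1$.

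Substituting $u=K_{f}$, $v=J_{f}$ and integrating over $\Omega'$ then gives
$$\int_{\Omega'\cap\{|Df|^{2}\ge 1\}}P(|Df|)\,\mathrm{d}z\ \lesssim\ \int_{\Omega'}\exp[\mathcal{A}(K_{f})]\,\mathrm{d}z\ +\ \int_{\Omega'}J_{f}\,\mathrm{d}z\ <\ \infty,$$
where the first integral is finite by the standing hypothesis $\exp[\mathcal{A}(K_{f})]\in L^{1}_{loc}(\Omega)$ and the second by requirement (ii) in the definition of a mapping of finite distortion. Adding the trivial contribution from $\{|Df|\le 1\}$ gives $P(|Df|)\in L^{1}_{loc}(\Omega)$, i.e.\ $f\in W^{1,P}_{loc}(\Omega)$ as claimed.

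The main obstacle will be the Legendre-transform asymptotics $\Phi^{*}(v)\lesssim v\,\mathcal{A}^{-1}(\log v)$, since one must control the sub-leading correction $\log\mathcal{A}'(t)$ in the inversion of $\Phi'$. Condition (A--2), namely $\int_{1}^{\infty}\mathcal{A}(t)/t^{2}\,\mathrm{d}t=\infty$, is precisely what forces $\mathcal{A}$ to grow fast enough for $\Phi^{*}$ to behave as claimed and for the Orlicz pair $(\exp\mathcal{A},P)$ to be genuinely complementary at infinity, while (A--1) makes $\mathcal{A}^{-1}$ a well-defined, smooth, increasing function on $[0,\infty)$ with $\mathcal{A}^{-1}(0)=1$, which is essential for the denominator $\mathcal{A}^{-1}(\log(uv))$ to be bounded below by $1$ on the relevant set.
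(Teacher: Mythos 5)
First, a point of comparison: the paper does not prove this statement at all — Lemma \ref{thm20.5.1} is quoted from \cite{astala2009elliptic} (Theorem 20.5.1) without proof, so there is no internal argument to measure yours against. Judged on its own, your overall strategy is sound: since the paper's definition of a mapping of finite distortion already includes $f\in W^{1,1}_{loc}$ and $J_f\in L^{1}_{loc}$, reducing $P(|Df|)\in L^{1}_{loc}$ to a pointwise inequality that separates $K_f$ from $J_f$ via $|Df|^{2}=K_fJ_f$, and then integrating against $\exp[\mathcal{A}(K_f)]\in L^{1}_{loc}$ and $J_f\in L^{1}_{loc}$, does yield the lemma as stated.

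The weak point is your derivation of the key pointwise bound. You route it through Fenchel--Young for $\Phi=\exp[\mathcal{A}(\cdot)]$ and the asymptotics $(\Phi^{*})'(s)\sim\mathcal{A}^{-1}(\log s)$, which require $\log\Phi'(t)\sim\mathcal{A}(t)$, i.e. control of the correction $\log\mathcal{A}'(t)$ (and convexity of $e^{\mathcal{A}}$ even to set up the Legendre transform as in Section 2); none of this follows from (\ref{A-1})--(\ref{A-2}) alone, and you flag it as the ``main obstacle'' without resolving it. In fact the Legendre apparatus is unnecessary here: for $u\ge 1$, $v\ge 0$, $uv\ge 1$ one has directly
\begin{equation*}
\frac{uv}{\mathcal{A}^{-1}\bigl(\log(uv)\bigr)}\ \le\ \exp[\mathcal{A}(u)]\ +\ v,
\end{equation*}
by splitting into two cases: if $\mathcal{A}(u)\ge\log(uv)$, then $\exp[\mathcal{A}(u)]\ge uv\ge uv/\mathcal{A}^{-1}(\log(uv))$ because $\mathcal{A}^{-1}(\log(uv))\ge\mathcal{A}^{-1}(0)=1$; if $\mathcal{A}(u)<\log(uv)$, then $u<\mathcal{A}^{-1}(\log(uv))$ and the left-hand side is less than $v$. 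With $u=K_f$, $v=J_f$ on $\{|Df|\ge 1\}$ this is exactly $P(|Df|)\le\exp[\mathcal{A}(K_f)]+J_f$, and the remainder of your argument goes through verbatim. This also corrects your closing remark: (\ref{A-2}) is not what makes this estimate work — it enters here only to guarantee that $\mathcal{A}$ is unbounded, so that $\mathcal{A}^{-1}$ and hence $P$ in (\ref{P}) are defined for all large $t$; its real force is needed in the deeper ingredients of the paper (e.g. the divergence-type hypotheses behind Lemma \ref{lusin N} and the regularity behind Lemma \ref{proposition}), and in the original existence theory in \cite{astala2009elliptic}, not in this pointwise reduction.
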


Obviously, $\mathcal{A}_{p,n}$ satisfies (\ref{A-1}) and (\ref{A-2}). We denote the associated function of $\mathcal{A}_{p,n}$ by $P_n$.
Next we present a lemma essentially due to Gill \cite{gill2010planar}.

\begin{lemma}\label{proposition}
Suppose $f \in W^{1,P_n}_{loc}(\Omega)$ is a solution to the Beltrami equation (\ref{beltrami equality}) in a domain $\Omega \subset \mathbb{R}^2$ and the distortion function $K_{f}(z)$ satisfies $\exp[\mathcal{A}_{p,n}(K_{f}(z))] \in L^{1}_{loc}(\Omega)$,
then for all $0<\ \beta \ < \ p$, we have
\begin{equation*}
J_{f} \left[\log_{(n+1)}\left(e^{e^{\iddots^{e}}}\ +\ J_{f}\right)\right]^{\beta} \in L^{1}_{loc}(\Omega).
\end{equation*}
\end{lemma}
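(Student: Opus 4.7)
The plan is to establish the conclusion via a pointwise domination of the integrand by a linear combination of quantities already known to lie in $L^1_{loc}(\Omega)$, and then integrate. The natural candidates for the majorant are $\exp[\mathcal{A}_{p,n}(K_f)]$ (integrable by hypothesis (\ref{main condition})), $P_n(|Df|)$ (integrable by Lemma \ref{thm20.5.1} applied to $f\in W^{1,P_n}_{loc}$), and $J_f$ itself (integrable by condition~(ii) in the definition of a mapping of finite distortion).

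The key first step is to use the distortion equality $|Df|^2=K_f J_f$ together with the monotonicity of $\log_{(n+1)}$ (since $J_f\le|Df|^2$) to reduce the target quantity to
\[
\frac{|Df|^2}{K_f}\bigl[\log_{(n+1)}\bigl(e^{e^{\iddots^{e}}}+|Df|^2\bigr)\bigr]^\beta.
\]
I would then partition $\Omega$ by comparing $K_f$ to the natural threshold $\mathcal{A}_{p,n}^{-1}(\log|Df|^2)$, the value at which $\exp[\mathcal{A}_{p,n}(K_f)]$ first dominates $|Df|^2$. Where $K_f$ lies above this threshold, the factor $\log_{(n+1)}^\beta|Df|^2/K_f$ is small and the integrand is absorbed into $\exp[\mathcal{A}_{p,n}(K_f)]$. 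Where $K_f$ lies below the threshold, the integrand is absorbed into $P_n(|Df|)$ via the asymptotic
\[
P_n(t)\;\sim\;\frac{p\,t^2}{\prod_{i=1}^{n+1}\log_{(i)}(t^2)}\quad\text{as }t\gg 1.
\]
The sharp exponent $\beta<p$ enters precisely in this second region, where one must compare $\log_{(n+1)}^\beta(t^2)$ against the product $\prod_{i=1}^{n+1}\log_{(i)}(t^2)$ implicit in $P_n$.

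The strategy is modelled on Gill's derivation in \cite{gill2010planar} of $P_n(|Df|)\in L^1_{loc}$ from the cleaner pointwise inequality $P_n(t)\lesssim e^{\mathcal{A}_{p,n}(K)}+t^2/K$, itself verified by an identical split at the same threshold. The main obstacle I anticipate is the matching of the two regions at the interface: the pointwise bound in each region must hold with a uniform constant, and the asymptotic expansion of $\mathcal{A}_{p,n}^{-1}(y)\sim y\prod_{i=1}^n\log_{(i)}(y)/p$ must be tracked precisely enough to recover the sharp constraint $\beta<p$ rather than some weaker one. A secondary subtlety is to ensure the bounded-integrand regime (small $J_f$ or small $|Df|$) is handled by the trivial majorant $J_f$, so that the estimate is global on any compact subset of $\Omega$.
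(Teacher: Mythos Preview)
The paper does not supply its own proof of this lemma; it is stated as ``essentially due to Gill \cite{gill2010planar}'' and used as a black box.

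Your proposed pointwise argument has a genuine gap in Region~2. Take $K_f=1$ and $|Df|=t\gg 1$. The quantity you wish to dominate is then
\[
\frac{t^{2}}{K_f}\bigl[\log_{(n+1)}(c+t^{2})\bigr]^{\beta}
\;=\;t^{2}\bigl[\log_{(n+1)}(c+t^{2})\bigr]^{\beta},
\]
whereas
\[
P_n(t)\;\sim\;\frac{p\,t^{2}}{\prod_{i=1}^{n+1}\log_{(i)}(t^{2})}.
\]
The ratio of the former to the latter tends to $+\infty$, so no constant multiple of $P_n(|Df|)$ (nor of $\exp[\mathcal{A}_{p,n}(1)]$, which is just a number) can absorb the left side. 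The split at the threshold $\mathcal{A}_{p,n}^{-1}(\log t^{2})$ works for the \emph{smaller} quantity $P_n(t)$ precisely because $P_n(t)=t^{2}/\mathcal{A}_{p,n}^{-1}(\log t^{2})\le t^{2}/K$ holds there; it cannot work for a quantity that exceeds $t^{2}$ itself when $K$ is near $1$. In particular your claim that ``the sharp exponent $\beta<p$ enters precisely in this second region'' is not realized: the failure at $K_f=1$ occurs for every $\beta>0$.

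More fundamentally, the lemma asserts a strict \emph{gain} of integrability of $J_f$ over the a~priori $P_n$-regularity, and such a gain cannot be extracted from the distortion identity $|Df|^{2}=K_fJ_f$ by pointwise manipulations alone --- this is why the hypothesis that $f$ solve the Beltrami equation is present. Gill's argument in \cite{gill2010planar}, following Astala--Gill--Rohde--Saksman \cite{astala2010optimal}, obtains this higher integrability through weighted area-distortion estimates specific to Beltrami solutions (passing through the principal homeomorphic solution and its sharp modulus-of-continuity bounds). That analytic input is exactly where the constraint $\beta<p$ arises; your scheme never invokes the equation, which is why the constraint has nowhere to bite and the bound already collapses at $K_f=1$.
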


We now prove Theorem \ref{main theorem}.

\begin{proof}
Since
$$\mathcal{A}_{p,n}'(x)\ \gtrsim \ \frac{1}{\log_{(1)}(x)  \log_{(2)}(x)  \cdots  \log_{(n)}(x)}\ \ \mbox{ as }\ x \ \gg \ 1,$$
we know $\mathcal{A}_{p,n}(x)$ satisfies (\ref{C-1}) and (\ref{C-2}). It follows from Lemma \ref{lusin N} that $f$ satisfies the Lusin's condition (N).

Since
$$x\ \lesssim \ \exp(\mathcal{A}_{p,n}(x)) \ \ \mbox{ for }\ x \ \ge \ 1,$$
it follows from (\ref{main condition}) that
\begin{equation}\label{only one}
K_f \in L^1_{loc}(\Omega).
\end{equation}
So, Lemma \ref{Lusin N^-1} tells us $J_{f} \ > \ 0$ almost everywhere in $\Omega$.

Given compact set $\widetilde{M} \subset f(\Omega)$, we have $M= f^{-1}(\widetilde{M}) \subset \Omega$ is a compact set. By Corollary 3.3.3 in \cite{astala2009elliptic}, we obtain $f$ is differentiable almost everywhere in $\Omega$. So, we can divide the set $M$ into two subsets $M'$ and $M''$,
where $M'$ is the subset in which $f$ is differentiable and $ J_{f}(z)\ > \ 0$ and $M''\ =\ M  \setminus M'$ has Lebesgue measure zero. For any $z \in M'$, by Lemma A.29 of \cite{hencl2014lectures}, we have
$$Df^{-1}(f(z))\ =\ (Df(z))^{-1},$$
so $|Df^{-1}(f(z))|^2 J_f(z)\ =\ K_f(z)$ and $K_{f^{-1}}(f(z))\ =\ K_{f}(z)$. So, it follows from Corollary A.36 (c) of \cite{hencl2014lectures} and the Lusin's condition (N) of $f$ that

\begin{equation}\label{main theorem 1}
\int_{\widetilde{M}} |Df^{-1}(w)|^{2}\, \mathrm{d}w \ = \ \int_{M} K_{f}(z)\, \mathrm{d}z
\end{equation}
and
\begin{equation}\label{main theorem 2}
\int_{\widetilde{M}}\left[ \log_{(n)}\left(e^{e^{\iddots^{e}}}\ +\ K_{f^{-1}}\right) \right]^{\beta}\, \mathrm{d}w\ =\ \int_{M}\left[ \log_{(n)}\left(e^{e^{\iddots^{e}}} \ + \ K_{f}\right) \right]^{\beta} J_{f}\, \mathrm{d}z.
\end{equation}

By (\ref{only one}) and $J_{f^{-1}} \le |Df^{-1}|^2$, it follows from (\ref{main theorem 1}) that $J_{f^{-1}} \in L^{1}_{loc}(f(\Omega))$. So by Theorem 3.3 of \cite{hencl2006regularity}, we have $f^{-1}$ is a mapping of finite distortion.

Next we prove (\ref{main result}). Because of (\ref{main theorem 2}), it suffices to prove
\begin{equation}\label{main theorem 3}
\int_{M}\left[ \log_{(n)}\left(e^{e^{\iddots^{e}}} \ +\ K_{f}(z)\right) \right]^{\beta} J_{f}(z)\, \mathrm{d}z\ <\ \infty
\end{equation}
for any compact set $M \subset \Omega.$
Let
$$s \ =\ J_{f}(z)\ \ \ \mbox{ and }\ \ \ t\ =\ \left[\log_{(n)}\left(e^{e^{\iddots^{e}}}\ +\ K_{f}(z)\right)\right]^{\beta}.$$
Since
$$\mathcal{A}_{p,n}(\exp_{(n)}(t^{\frac{1}{\beta}}))\ \le \  \mathcal{A}_{p,n}(K_{f}(z)) \ + \ p\left(e^{e^{\iddots^{e}}}\ -\ 1\right),$$
it follows from Lemma \ref{Young} that there exist constants $C'$ and $C''$ such that
\begin{equation}\label{main theorem 3-1}
ts\ \le \ C' \exp[\mathcal{A}_{p,n}(K_{f})] \ + \ C''J_{f} \left[\log_{(n+1)}\left(e^{e^{\iddots^{e}}} \ +\ J_{f}\right)\right]^{\beta}.
\end{equation}
Note that $\mathcal{A}_{p,n}(x)$ satisfies (\ref{A-1}) and (\ref{A-2}) conditions, then Lemma \ref{thm20.5.1} implies
$$f \in W^{1,P_n}_{loc}(\Omega),$$
where $P_n$ is the associated function of $\mathcal{A}_{p,n}$. So, it follows from Lemma \ref{proposition} that
\begin{equation}\label{main theorem 4}
J_{f} \left[\log_{(n+1)}\left(e^{e^{\iddots^{e}}}\ +\ J_{f}\right)\right]^{\beta} \in L^{1}_{loc}(\Omega).
\end{equation}
Hence, according to (\ref{main theorem 3-1}), (\ref{main condition}) and (\ref{main theorem 4}), (\ref{main theorem 3}) is proved.

To show Theorem \ref{main theorem} is sharp, as in Theorem 4 of \cite{gill2010planar}, we consider Kovalev--type function $h$ in $\Omega\ =\ \mathbb{D}$ as
\begin{equation}\label{theorem 1-4}
h(z)\ =\ \frac{z}{|z|} \rho(|z|)
\end{equation}
where $\rho(t)\ =\ \left[ \log_{(n+1)}\left(e^{e^{\iddots^{e}}}\ +\ \frac{1}{t}\right)\right]^{-\frac{p}{2}} \left[ \log_{(n+2)}\left(e^{e^{\iddots^{e}}}\ +\ \frac{1}{t}\right)\right]^{-\frac{1}{2}}$ and both of $e^{e^{\iddots^{e}}}$ mean $\exp_{(n+1)}(e)$.
For the reader's convenience, we carry out the main computation.
By (\ref{main theorem 2}), it's enough to check
\begin{equation}\label{theorem 5-0}
J_{h} \left[ \log_{(n)}\left(e^{e^{\iddots^{e}}}+K_{h}\right) \right]^{p}\ \notin \ L_{loc}^1(\mathbb{D}).
\end{equation}
From the definition of $h$, it's sufficient to consider $h$ in the small enough neighbourhood of $0$.
So with the formulas in section 6.5.1 of \cite{iwaniec2001geometric}, when $|z|\ \ll \ 1$, we have
\begin{equation}\label{main theorem 5}
J_{h}(z)\ \sim \  \frac{1}{|z|^{2}} \frac{1}{\log_{(1)}(\frac{1}{|z|})}  \cdots   \frac{1}{\log_{(n)}(\frac{1}{|z|})} \left[\log_{(n+1)}(\frac{1}{|z|})\right]^{-p-1} \left[\log_{(n+2)}(\frac{1}{|z|})\right]^{-1}
\end{equation}
and
$$K_{h}(z) \ = \ \frac{\rho(|z|)}{|z|\rho'(|z|)}\ \sim\ \log_{(1)}(\frac{1}{|z|}) \log_{(2)}(\frac{1}{|z|}) \cdots \log_{(n+1)}(\frac{1}{|z|}).$$
Since
$$ \log \left(e^{e^{\iddots^{e}}} +K_{h}(z)\right)\ \sim \ \log(K_{h}(z))\ \sim \ \log_{(2)}(\frac{1}{|z|})\ \ \mbox{ as } \ |z| \ \ll \ 1,$$ we get
\begin{equation}\label{main theorem 6}
\left[\log_{(n)}\left(e^{e^{\iddots^{e}}} +K_{h}\right)\right]^{p}\ \sim\ \left[\log_{(n+1)}(\frac{1}{|z|})\right]^{p} \ \ \mbox{ as } \ |z| \ll 1.
\end{equation}
Combining (\ref{main theorem 5}) and (\ref{main theorem 6}), we obtain
\begin{align*}
J_{h} \left[ \log_{(n)}\left(e^{e^{\iddots^{e}}}+K_{h}\right) \right]^{p}\  \sim & \ \frac{1}{|z|^{2}} \frac{1}{\log_{(1)}(\frac{1}{|z|})} \cdots \frac{1}{\log_{(n+2)}(\frac{1}{|z|})}\\
\end{align*}
Now, (\ref{theorem 5-0}) is obtained from
\begin{align*}
 & \int_{0}^{*} \frac{1}{t} \frac{1}{\log_{(1)}(\frac{1}{t})}  \cdots  \frac{1}{\log_{(n+2)}(\frac{1}{t})} \, \mathrm{d}t\\
=\ & \int_{*}^{+\infty} \frac{1}{s} \frac{1}{\log_{(1)}(s)} \cdots \frac{1}{\log_{(n+2)}(s)} \, \mathrm{d}s \\
=\ &\cdots \ = \ \int_{*}^{+\infty} \frac{1}{\log(x)}\, \mathrm{d}x
\ =\  \infty.
\end{align*}
The proof is complete.
\end{proof}

\section*{Acknowledgements}
The author wishes to express his sincere appreciation to Professor Luigi Greco who provided the valuable method for Example \ref{example} and to his supervisor Professor Congwen Liu who critically read the manuscript and made numerous helpful suggestions.

\bigskip
\bibliographystyle{amsplain}

\end{document}